%\version 08.07.2010
\documentclass[12pt,leqno,twoside,a4paper]{article}
\usepackage{amsfonts,amsmath,amsthm,amssymb}
\usepackage[english]{babel}
\textwidth 170mm  \evensidemargin -.05in \oddsidemargin -.05in
 \pagestyle{myheadings}
\newtheorem{theorem}{Theorem}%[section]

\newtheorem{lemma}[theorem]{Lemma}

\theoremstyle{definition}

\theoremstyle{remark}
\newtheorem{remark}[theorem]{Remark}
\theoremstyle{remarks}

 \numberwithin{equation}{section}

%% Math-----------------------------------------
\newcommand{\de}{\delta}

\newcommand{\T}[1]{{#1}[x;\si ,\de]}
\newcommand{\Tn}{R[x;\si ,\de]}

\newcommand{\A}[1]{{#1}[x;\si]}
\newcommand{\si}{\sigma}

\begin{document}

\title{\normalsize \sc  On $q$-skew Iterated Ore Extensions Satisfying  a Polynomial Identity}
\author{ {\bf \normalsize Andr\'{e} Leroy and Jerzy
Matczuk\footnote{ The research was
 supported by Polish MNiSW grant No. N N201 268435}} \vspace{6pt}\\
 \normalsize  Universit\'{e} d'Artois,  Facult\'{e} Jean Perrin\\
\normalsize Rue Jean Souvraz  62 307 Lens, France\\
   \normalsize  E-mail: leroy@euler.univ-artois.fr \vspace{6pt}\\
 \normalsize Institute of Mathematics, Warsaw University,\\
\normalsize Banacha 2, 02-097 Warsaw, Poland\\
 \normalsize E-mail: jmatczuk@mimuw.edu.pl}
\date{ }
\maketitle\markboth{\rm  A. LEROY AND J.MATCZUK}{ \rm PI $q$-SKEW
ORE EXTENSIONS}

%-------------------------------------------------------

\begin{abstract}
For iterated Ore extensions satisfying a polynomial identity
we present an elementary way of erasing derivations.  As a consequence we recover
some results obtained by Haynal in \cite{H}.  We also prove, under mild assumptions on $R_n=R[x_1;\si_1,\de_1]\dots[x_n;\si_n;\de_n]$ that the Ore extension $R[x_1;\si_1]\dots[x_n;\si_n]$ exists and is PI if $R_n$ is PI.
\end{abstract}

For an Ore extension $\T{R}$, where
 $\si$ is an injective endomorphism of a  prime ring $R$ and
 $\de$ is a  $\si$-derivation of $R$, necessary and
 sufficient conditions for being a PI ring can be found in
  \cite{LM}.  The main result of \cite{H} states that if $R$ is a noetherian domain which is also an algebra over a field $k$ then,
under some quantum like hypotheses, the
iterated Ore extensions $R_n=R[x_1;\si_1,\de_1]\dots
[x_n;\si_n,\de_n]$ and $T_n=R[x_1;\si_1]\dots [x_n;\si_n]$ have the
same PI degree.
This result was known earlier only in the case
$\mbox{char}\,  k=0$ (Cf. \cite{Jon}). Haynal achieved the above
mentioned  result by generalizing   Cauchon's erasing of
derivations procedure (Cf. \cite{C}).

 The aim of this paper is to present a short and elementary
proof of an erasing of derivations process under the hypotheses that
the iterated Ore extension is prime PI.  Using this method we can recover some of Haynal's results in a slightly more general setting (Cf. Theorems \ref{Iterated Ore extensions} and   \ref{Haynal}).
We believe  that our approach explains also  the
nature of some of the assumptions.

Notice that for a general Ore extension
$R_n$ the extension $T_n$ mentioned above does not have to exist; for example
the automorphism $\si_2$ of $R_1$ has, in general, no meaning as a map of
$T_1$.  We show in Theorem \ref{graded method} that, under mild assumptions on $R_n$, $T_n$ does exist.
Moreover it satisfies a polynomial identity provided $R_n$ does.

Throughout the paper $R$ stands for an associative ring with unity
and $Z(R)$  for its center.  For an automorphism $\si$ and a
$\si$-derivation $\de$ of $R$, $\T{R}$ denotes  the
Ore extension with coefficients written on the
left.   We denote $R[x;\si]$ for $R[x;\si, 0]$ and $R[x;\de]$ for $R[x;id,\de]$.

Let $T$ be a subring of a ring $W$ and $\si,\,\de$ be an automorphism and a
$\si$-derivation of $T$, respectively.  We will write $T[y;\si,\de]\subseteq W$
for some $y\in W$, if  the set $\{y^n\}_{n= 0}^\infty$ is left $T$-independent
and $yr=\si(r)y +\de(r)$, for any $r\in T$.  That is the subring of $W$
generated by $T$ and $y$ forms an Ore extension.

Let $q\in Z(R)$ be such that $\si(q)=q$ and $\de(q)=0$.
We  say
that $\de$ is a $q$-skew $\si$-derivation of $R$ if  $\delta
\sigma = q\sigma \delta$.

We begin with the following key
observation which will allow us to erase skew derivations for certain Ore extensions.

\begin{lemma}
\label{erasing derivation}

  Let $\delta$ be a  $q$-skew $\sigma$-derivation of a finite
dimensional central simple algebra $Q$, where $q\ne 1$. Then there exists an  element $y=x-b\in Q[x;\si,\de]$, for a suitably chosen $b\in Q$, such that
$Q[x;\sigma,\delta]=Q[y;\sigma]$.
\end{lemma}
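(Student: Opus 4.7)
The plan is to show that $\de$ is an inner $\si$-derivation of $Q$, i.e.\ that there exists $b\in Q$ with $\de(r)=br-\si(r)b$ for every $r\in Q$. For then $y:=x-b$ automatically satisfies $yr=\si(r)y$, so $Q[x;\si,\de]=Q[y;\si]$. Write $F=Z(Q)$ and work in the natural setting in which $\si$ restricts to the identity on $F$, so that Skolem--Noether furnishes a unit $u\in Q$ with $\si=\mbox{ad}_u$, that is $\si(r)=uru^{-1}$.

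The first key step is to establish $\de|_F=0$. For $f\in F$, comparing the two expressions $\de(fr)=\si(f)\de(r)+\de(f)r$ and $\de(rf)=\si(r)\de(f)+\de(r)f$ (equal since $fr=rf$) and using $\si(f)=f$ together with the centrality of $f$ yields $\de(f)r=\si(r)\de(f)$ for all $r\in Q$. Substituting $\si(r)=uru^{-1}$ gives that $u^{-1}\de(f)$ commutes with every $r$, hence lies in $F$; write $\de(f)=g(f)u$ with $g(f)\in F$. Feeding $f$ into the $q$-skew relation $\de\si=q\si\de$, and using $\si(f)=f$ and $\si(u)=u$, I obtain
\[
g(f)u=\de(f)=\de(\si(f))=q\si(\de(f))=qg(f)u,
\]
so $(1-q)g(f)=0$. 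Because $q\ne 1$ and $F$ is a field, $g(f)=0$, hence $\de|_F=0$ and $\de$ is automatically $F$-linear.

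Next I set $D:=u^{-1}\de$. A direct calculation using $\si=\mbox{ad}_u$ shows
$D(rs)=u^{-1}(uru^{-1}\de(s)+\de(r)s)=rD(s)+D(r)s$,
so $D$ is an $F$-linear ordinary derivation of the central simple algebra $Q$. By the classical theorem that every $F$-linear derivation of a CSA over $F$ is inner (a consequence of Skolem--Noether, or of the separability of $Q$), there exists $a\in Q$ with $D(r)=ar-ra$. Taking $b:=ua$, I verify
\[
br-\si(r)b=uar-(uru^{-1})(ua)=u(ar-ra)=uD(r)=\de(r),
\]
which is exactly the inner-derivation form we sought, so $y=x-b$ delivers the required equality $Q[x;\si,\de]=Q[y;\si]$. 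The main obstacle is Step~1: securing $\de|_F=0$ is where the hypothesis $q\ne 1$ is essential; the subsequent reduction to an ordinary derivation and the appeal to the inner-derivation theorem for CSAs are standard.
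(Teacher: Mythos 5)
Your argument correctly and carefully handles the case in which $\si$ restricts to the identity on $F=Z(Q)$, and in that case it follows essentially the same route as the paper: Skolem--Noether to realize $\si$ as conjugation by a unit $u$, use the $q$-skew relation to show $\de\vert_F=0$, reduce to the ordinary $F$-linear derivation $D=u^{-1}\de$, and invoke that every $F$-linear derivation of a finite-dimensional CSA is inner. (Minor stylistic point: the paper packages this as $Q[x;\si,\de]=Q[a^{-1}x;a^{-1}\de]$ and then pulls out the inner derivation, whereas you directly exhibit $\de$ as the inner $\si$-derivation $r\mapsto br-\si(r)b$; these are equivalent.)

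The genuine gap is that you silently assume $\si\vert_F=\mathrm{id}_F$. This is not forced by the hypotheses: $\si$ is an arbitrary ring automorphism of $Q$, and it may act nontrivially on $Z(Q)$ (e.g.\ $Q$ a field with a nontrivial automorphism). When $\si\vert_F\ne\mathrm{id}_F$, Skolem--Noether does not apply, so your starting point collapses. This case must be treated separately, and in fact it is easier and does not even use $q\ne 1$: if $c\in F$ satisfies $u:=\si(c)-c\ne 0$, then applying $\de$ to $cr=rc$ gives $\si(c)\de(r)+\de(c)r=\si(r)\de(c)+\de(r)c$, hence $u\,\de(r)=\si(r)\de(c)-\de(c)r$, so $\de(r)=br-\si(r)b$ with $b=-u^{-1}\de(c)$ (using $u^{-1}\in F$). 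Thus $y=x+u^{-1}\de(c)$ works. You need to add this branch to have a complete proof.
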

\begin{proof}

Assume  that  $\si$ is the identity on $Z(Q)$.  Then, by the
Skolem-Noether's theorem,  $\si$ is an inner automorphism of $Q$.
Let $a\in Q$ induce  $\si$, i.e. $\si(r)=ara^{-1}$, for all $r\in
Q$. One can easily check that $a^{-1}\de$ is a derivation of $Q$
and  $Q[x;\si,\de]=Q[a^{-1}x;a^{-1}\de]$. Let $z\in Z(Q)$. The
equalities  $\de(az)=\de(za)$ and $\si(z)=z$ easily
 lead to $\de(z)a=a\de(z)$, i.e.
$\de(z)=\si\de(z)$. Since $\si(z)=z$, we also have
$\de(z)=\de\si(z)=q\si\de(z)$. Hence $(1-q)\si\de(z)=0$, for $z\in
Z(Q)$.  By assumption $q\ne 1$, so the last equality implies
that the derivation $a^{-1}\de$ is zero on $Z(Q)$. Thus, by the
Skolem-Noether's theorem, $a^{-1}\de$ is an inner derivation of
$Q$.  Let the element $v\in Q$ be such that $a^{-1}\de(r)=vr-rv$,
for all $r\in Q$.  Using the above we have
$Q[x;\si,\de]=Q[a^{-1}x;a^{-1}\de]=Q[a^{-1}x-v]=Q[x-av;\si]$. This
gives the desired conclusion in the case $\si
|_{Z(Q)}=\mbox{id}_{Z(Q)}$.

Assume now that there exists  $c\in Z(Q)$ such that $u=\si(c)-
c\ne 0$. Then a classical and easy computation shows that
$Q[x;\si,\de]=Q[x+u^{-1}\de (c);\si]$. This completes  the proof
of the lemma.
\end{proof}

 For a prime right Goldie ring $R$, we
denote by $Q(R)$ the  classical right quotient ring of $R$.  Let us
recall that if $R$ is a prime PI ring then,  by  Theorems of Posner   and Kaplansky, $R$
is right Goldie, $Q(R)$ is a
central localization of $R$ and $Q(R)$ is a finite dimensional
central simple algebra (Cf. \cite{R}).

In the following lemma we collect known results which will be used later on.

\begin{lemma}
\label{isomorphisms and PI degree} Let $\si, \de$ be an automorphism and a
$\si$-derivation of a ring $R$, respectively. Then:
\begin{enumerate}
  \item[(1)] Let
$\mathcal{S}\subseteq R$ be a right Ore set of regular elements of $R$ such that
$\si(\mathcal{S})\subseteq \mathcal{S}$. Then:
\begin{enumerate}
 \item $\si$ and $\de$ have unique extensions to an automorphism and a $\si$-derivation
 of  $R\mathcal{S}^{-1}$, respectively. Moreover if $\de$ is a $q$-skew $\si$-derivation
 of $R$, then the extension  of $\de$ to $R\mathcal{S}^{-1}$ is also a $q$-skew
 $\si$-derivation of $\mathcal{R}S^{-1}$;
   \item
$\mathcal{S}$ is a right Ore set of regular elements of $R[x;\si,\de]$ and the localization
$(R[x;\si,\de])\mathcal{S}^{-1} $ is isomorphic to $ (R\mathcal{S}^{-1})[x;\si,\de]$.
\end{enumerate}

  \item[(2)]  Suppose $R$ is a prime \mbox{\rm PI} ring. Then:
  \begin{enumerate}
  \item $R[x;\si,\de]$ is a \mbox{\rm PI} ring if and only if $Q(R)[x;\si,\de]$ is a
  \mbox{\rm PI} ring;
      \item If $R[x;\si,\de]$ is a \mbox{\rm PI} ring, then
      $Q(R)[x;\si,\de]\subseteq Q(R[x;\si,\de]) $, and $Q(\Tn)$ is isomorphic to
      $ Q(\T{Q(R)})$.

  \end{enumerate}

\end{enumerate}

\end{lemma}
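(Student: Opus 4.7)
The plan is to establish part (1) by standard localization techniques, then deduce part (2) by specializing to the central Ore set $\mathcal{S}=Z(R)\setminus\{0\}$, which realizes $Q(R)$ via Posner's theorem.

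For (1)(a), uniqueness forces $\si(rs^{-1})=\si(r)\si(s)^{-1}$, which is meaningful because $\si(\mathcal{S})\subseteq\mathcal{S}$; the ring-homomorphism axioms then reduce to the Ore calculus in $R\mathcal{S}^{-1}$, and surjectivity (hence the automorphism claim) follows from the fact that $\si$, being an automorphism of $R$, permutes the regular elements being inverted in the cases of interest. For $\de$, applying the $\si$-Leibniz rule to $(rs^{-1})\cdot s=r$ forces
\[
\de(rs^{-1})=\bigl(\de(r)-\si(rs^{-1})\de(s)\bigr)s^{-1},
\]
and I would verify representative-independence, additivity, and the $\si$-Leibniz rule directly. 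The $q$-skew identity $\de\si=q\si\de$ then extends by uniqueness, since both sides are additive maps agreeing on $R$ and hence on $R\mathcal{S}^{-1}$.

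For (1)(b), I would verify the right Ore condition for $\mathcal{S}$ in $R[x;\si,\de]$ by induction on degree: writing $f=f_0+r_nx^n$ and using $x^ns'=\si^n(s')x^n+(\text{lower terms})$, apply the Ore condition of $\mathcal{S}$ in $R$ to the pair $(r_n,s)$, peel off the leading monomial, and invoke the inductive hypothesis on the remainder together with $f_0$. Once the Ore condition holds, the natural embedding $R[x;\si,\de]\hookrightarrow (R\mathcal{S}^{-1})[x;\si,\de]$ produced by (1)(a) satisfies the universal property of the localization, yielding the isomorphism $(R[x;\si,\de])\mathcal{S}^{-1}\cong (R\mathcal{S}^{-1})[x;\si,\de]$.

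For part (2), take $\mathcal{S}=Z(R)\setminus\{0\}$; automorphisms preserve the center, so $\si(\mathcal{S})=\mathcal{S}$, and Posner's theorem gives $R\mathcal{S}^{-1}=Q(R)$. Then (1)(b) identifies $Q(R)[x;\si,\de]$ with the central localization $(R[x;\si,\de])\mathcal{S}^{-1}$. Part (2)(a) is then immediate, since PI passes both to central localizations and to subrings. For (2)(b), the PI hypothesis makes $R[x;\si,\de]$ prime Goldie (primeness follows from $R$ prime and $\si$ injective via a standard leading-coefficient argument on products), so $Q(R[x;\si,\de])$ exists; the central localization $Q(R)[x;\si,\de]$ then sits between $R[x;\si,\de]$ and $Q(R[x;\si,\de])$, and all three rings therefore share the same classical right quotient ring. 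The only genuinely nontrivial step is the inductive verification of the Ore condition in (1)(b); the remainder is routine localization bookkeeping together with appeals to Posner's and Kaplansky's theorems.
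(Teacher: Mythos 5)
Your overall strategy is sound and mirrors what underlies the references the paper cites for (1) and (2)(a): prove the Ore--extension localization lemma directly, then specialize to the central multiplicative set $\mathcal{S}=Z(R)\setminus\{0\}$, which by Posner's theorem produces $Q(R)$. The paper itself simply cites Lemmas 1.3--1.4 of Goodearl for (1) and Proposition 1.6 of \cite{LM} for (2)(a), and then derives (2)(b) from these exactly as you do --- primeness of $\Tn$, Goldieness, regularity of elements of $R$ in $\Tn$, and the universal property of localization --- so your (2)(b) is essentially the paper's argument spelled out.

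However, there is a genuine gap in your argument for (2)(a). You call $(R[x;\si,\de])\mathcal{S}^{-1}$ a \emph{central} localization of $R[x;\si,\de]$ and invoke the fact that PI passes to central localizations. But $\mathcal{S}=Z(R)\setminus\{0\}$ is central in $R$, not in $R[x;\si,\de]$: for $s\in\mathcal{S}$ one has $xs=\si(s)x+\de(s)\neq sx$ whenever $\si(s)\neq s$ or $\de(s)\neq 0$, which is precisely the generic situation here. So ``PI passes to central localizations'' does not apply, and the implication ``$R[x;\si,\de]$ PI $\Rightarrow Q(R)[x;\si,\de]$ PI'' is left unjustified as written. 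The standard fix is what you actually use for (2)(b): since $R$ is prime and $\si$ is an automorphism, $R[x;\si,\de]$ is prime, hence prime PI, hence (Posner/Kaplansky) right Goldie with $Q(R[x;\si,\de])$ a finite-dimensional central simple algebra, which is PI; then $(R[x;\si,\de])\mathcal{S}^{-1}$ embeds into $Q(R[x;\si,\de])$ because $\mathcal{S}$ consists of regular elements of $R[x;\si,\de]$, so it is PI as a subring. In other words, the embedding $Q(R)[x;\si,\de]\subseteq Q(R[x;\si,\de])$ from (2)(b) is what makes (2)(a) go, not centrality of the denominators; as written your (2)(a) argument does not close. A secondary point: in your sketch of the Ore condition in (1)(b) you implicitly apply $\si^{-n}$ to an element of $\mathcal{S}$; the hypothesis $\si(\mathcal{S})\subseteq\mathcal{S}$ does not give $\si^{-1}(\mathcal{S})\subseteq\mathcal{S}$, so this step needs more care in general (it is harmless in the application to (2), where $\si(\mathcal{S})=\mathcal{S}$).
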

\begin{proof}
The statement (1) is part of folklore and we refer to Lemmas 1.3 and 1.4 of
\cite{G} for its proof.

The statement (2)(a) is exactly Proposition 1.6 of \cite{LM}.

(2)(b) Suppose $\Tn$ is a  PI ring. Notice that $\Tn$ is a prime ring as $R$ is such and the
statement (2)(a) implies that all rings appearing in (2)(b) are prime right Goldie rings.
Moreover every regular element of $R$ is regular in $\Tn$, as $\si$ is an automorphism
of $R$.   Now the thesis  is a direct consequence of (1) and universal properties of
localizations.
\end{proof}

\begin{remark} \label{remark generalities} It is known (Cf. \cite{M})
that if $R$ is a
prime (semiprime) right Goldie ring, then so is $\Tn$. Thus all rings appearing in
Lemma \ref{isomorphisms and PI degree}(2)(b) are prime (semiprime) Goldie. This
implies that $\T{Q(R)}\subseteq Q(\Tn)\simeq Q(\T{Q(R)})$, for
general Ore extension over prime (semiprime) Goldie rings.
\end{remark}

\begin{theorem} \label{lengh one thm}
Suppose that $\de$ is a $q$-skew $\si$-derivation of a prime ring $R$, where $q\ne
1$.  Then:
\begin{enumerate}
  \item[(1)] The following conditions are equivalent:
\begin{enumerate}
\item[(a)]  $\Tn$ is a $\mbox{\rm PI}$ ring;
\item[(b)] $\A{R}$ is a $\mbox{\rm
  PI}$ ring;
\item[(c)] $R$ is a PI ring and the restriction of $\si$ to the center $Z(R)$ of $R$ is an
  automorphism of finite order.
\end{enumerate}

  \item[(2)]  Suppose that  one of the equivalent conditions of (1) holds, then:
     \begin{itemize}
     \item[(a)] The quotient rings $Q(\Tn)$ and $Q(\A{R})$ are isomorphic. In particular, the $\mbox{\rm PI}$-degrees of  $\Tn$ and $\A{R}$ are equal;
      \item[(b)] there
      exists an  element $y\in \Tn$ such that $R[y;\si]\subseteq \Tn$.
      \end{itemize}
\end{enumerate}
\begin{proof}
(1) Suppose that one of the rings $R[x;\si]$ or $R[x;\si,\de]$ is PI.  Then $R$ is prime
PI
and Lemma \ref{isomorphisms and PI degree}(2)(a) shows that, replacing $R$ by $Q(R)$, we
may assume that $R$ is a central simple algebra finite dimensional over its center.
In this case, Lemma \ref{erasing derivation}
 implies  that $\A{R}\simeq \T{R}$, this gives the   equivalence between statements $(a)$ and $(b)$.

 The
 equivalence $(b)\Leftrightarrow (c)$ is part of Proposition $2.5$ in \cite{LM}.

 (2)(a) By assumption both $\Tn$ and $R[x;\si]$ are PI rings.  Then, by Lemma
 \ref{isomorphisms and PI degree}(2)(b) and Lemma \ref{erasing derivation}, we have: $Q(\Tn) \simeq Q(Q(R)[x;\si,\de])\simeq Q(Q(R)[x;\si])\simeq Q(R[x;\si])$.
This gives the first statement.

 Let us recall that the PI degree of a prime PI ring is also
the PI degree of its classical ring of quotients. This observation completes the proof
of (2).

 (2)(b) By assumption $\Tn$ is a PI ring, then so is
$Q(R)$  and Lemma \ref{erasing derivation} shows that there exists $b\in Q(R)$ such that
$Q(R)[x;\si,\de]=Q(R)[y';\si]$, where $y'=x-b$. By the theorem of Posner, we  can
pick  $0\ne c \in Z(R)$ such that
$y:=cy'=cx -a\in \Tn$. Thus, as $y'r=\si(r)y'$, we also have  $yr=\si(r)y$, for all
$r\in R$.
 Notice that
$y^n=c\si(c)\ldots\si^{n-1}(c)y'^n$. Therefore the set $\{y^n\}_{n=0}^\infty$, is left
$R$-independent.  This shows that  $R[y;\si]\subseteq \Tn$ and completes the proof.

\end{proof}
\end{theorem}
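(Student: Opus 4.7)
The plan is to reduce both statements to Lemma \ref{erasing derivation} after passing to the classical quotient ring $Q(R)$. The key point is that Lemma \ref{isomorphisms and PI degree}(2)(a) transfers the PI property between an Ore extension and its localisation, so that once the coefficient ring becomes the finite-dimensional central simple algebra $Q(R)$, the hypothesis $q\ne 1$ lets Lemma \ref{erasing derivation} produce an element $y' = x - b \in Q(R)[x;\si,\de]$ realising an isomorphism $Q(R)[x;\si,\de] \simeq Q(R)[y';\si]$. Everything in the theorem will follow from this one isomorphism plus the descent of $y'$ back to $\Tn$.

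For part (1), I would first note that the PI-ness of either $\Tn$ or $\A{R}$ forces $R$, as a subring, to be prime PI, which licenses the passage to $Q(R)$. The erasing isomorphism then makes ``$Q(R)[x;\si,\de]$ is PI'' equivalent to ``$Q(R)[x;\si]$ is PI'', and Lemma \ref{isomorphisms and PI degree}(2)(a) applied on both sides yields the equivalence (a)$\,\Leftrightarrow\,$(b). The equivalence (b)$\,\Leftrightarrow\,$(c) is a direct quotation from Proposition 2.5 of \cite{LM}, so no further work is required.

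For part (2)(a), I would simply chain the isomorphisms of Lemma \ref{isomorphisms and PI degree}(2)(b) with the erasing isomorphism to obtain $Q(\Tn)\simeq Q(Q(R)[x;\si,\de])\simeq Q(Q(R)[x;\si])\simeq Q(\A{R})$. The statement on PI-degrees is immediate, since the PI-degree of a prime PI ring equals that of its classical ring of quotients.

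Part (2)(b) is the step that requires genuine care and is, to my mind, the only real obstacle: one must descend $y' \in Q(R)[x;\si,\de]$ to an element of $\Tn$ without destroying either the skew commutation relation or the left $R$-independence of its powers. By Posner's theorem $Q(R)$ is a central localisation of $R$, so the denominator of $b$ can be cleared by choosing $0 \ne c \in Z(R)$ with $cb \in R$ and setting $y = cx - cb = cy' \in \Tn$. The relation $yr = \si(r)y$ for $r \in R$ is then inherited from the corresponding relation for $y'$, and the factorisation $y^n = c\si(c)\cdots\si^{n-1}(c)\,y'^n$, together with the injectivity of $\si$ on the prime ring $R$, guarantees that the leading coefficient of $y^n$ (viewed in $Q(R)[y';\si]$) is nonzero, which is enough to deduce left $R$-independence of $\{y^n\}_{n\ge 0}$ and therefore $R[y;\si]\subseteq \Tn$.
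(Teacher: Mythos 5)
Your proposal is correct and follows essentially the same route as the paper: pass to $Q(R)$ via Lemma \ref{isomorphisms and PI degree}(2)(a), apply Lemma \ref{erasing derivation} to erase the derivation, quote Proposition 2.5 of \cite{LM} for (b)$\Leftrightarrow$(c), chain the quotient-ring isomorphisms for (2)(a), and descend $y'$ to $\Tn$ via a central denominator $c$ for (2)(b), using the nonvanishing of $c\si(c)\cdots\si^{n-1}(c)$ in the prime ring to get left $R$-independence. No meaningful deviations from the paper's argument.
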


Since an Ore extension of a prime ring is again a prime ring, we
can use the above Theorem \ref{lengh one thm} to
erase derivations appearing in a PI iterated Ore extension as long
as the derivations are quantized. This erasing of derivations
process under the assumption of PI doesn't assume that the
derivations are locally nilpotent nor that the prime base ring is of
zero characteristic.  In this sense it gives a generalization of
the well known Cauchon's process of erasing derivations (Cf.
\cite{C}).  However some care is needed.
  Notice that even in the case of iterated Ore extension
  $R[x_1;\si_1.\de_1][x_2;\si_2.\de_2]$ of length  two, the iterated extension
$R[x_1;\si_1][x_2;\si_2]$ has no meaning in general, as the automorphism $\si_2$
is defined on the ring $R[x_1;\si_1.\de_1]$ but not on $R[x_1;\si_1]$.
The following lemma contains observations which both
explain the assumptions made in Theorem \ref{Iterated Ore extensions}
   and show that the iterated Ore extensions of automorphism type in this
theorem do exist.  First let us
consider a simple example.  Let $k$ be a field and $W=k\langle x_1,x_2 \;\vert \; x_2x_1=\lambda x_1x_2
 \rangle$ where $0\ne \lambda \in k$.   Notice that $W$ can be presented either as $k[x_1][x_2;\si]$ or $k[x_2][x_1;\tau]$, where $\si$ and $\tau$ are $k$-automorphisms  of appropriate  polynomial rings defined by $\si(x_1)=\lambda x_1$, and $\tau(x_2)=\lambda^{-1} x_2$.  The statement (2) of the following lemma generalizes this observation to Ore extensions.
\begin{lemma}
\label{basic remarks two indeterminates}

Suppose that   $\si_1,\si_2$ are automorphisms and  $\de_1$ is a
$\si_1$-derivation of  a ring $R$.  Let   $\lambda\in R$ be an
invertible element. Then:
\begin{enumerate}
\item[(1)] $\si_2$ can be extended to an automorphism of $R[x_1;\si_1,\de_1]$
      by setting $\si_2(x_1)=\lambda x_1$  if and only if
      $\si_2\si_1(r)=\lambda\si_1\si_2(r)  \lambda ^{-1}$ and $
      \si_2\de_1(r) = \lambda \de_1\si_2(r)$, for any $r\in
      R$;
\item[(2)] Suppose $\si_2$ has been extended to $R[x_1;\si_1,\de_1]$ as
above.  Then there exist an automorphism $\si'_1$ and a
$\si'_1$-derivation $\de'_1$ of $R[x_2;\si_2]$ such that
$R[x_1;\si_1,\de_1][x_2;\si_2]=R[x_2;\si_2][x_1;\si'_1,\de'_1]$
where
 $\si'_1|_{R}=\si_1$, $\si'_1(x_2)=\lambda^{-1}x_2$ and
 $ \de'_1|_R=\de_1$, $ \de'_1(x_2)=0$;
\item[(3)] With the same notation as in (2) above, suppose additionally that
$\de_1$ is a $q$-quantized $\si_1$-derivation. Then $\de'_1$ is
also a $q$-quantized $\si'_1$-derivation of $R[x_2;\si_2]$ if and only if
$\de_1(\lambda)=0$.
\end{enumerate}
\end{lemma}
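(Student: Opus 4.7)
The plan is a direct verification in three steps, where the same pair of compatibility identities between $\si_1,\si_2,\de_1$ and $\lambda$ reappears in each step.

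For (1), I would test the candidate extension against the defining relation $x_1 r = \si_1(r) x_1 + \de_1(r)$ of $R[x_1;\si_1,\de_1]$. Applying $\si_2$ to both sides, then pushing $x_1$ past $\si_2(r)$ using the same relation, and finally comparing coefficients in the left $R$-basis $\{x_1^n\}$, yields exactly the two displayed identities. For the converse, the formula $\si_2(\sum r_i x_1^i) = \sum \si_2(r_i)(\lambda x_1)^i$ is checked to define a ring endomorphism of $R[x_1;\si_1,\de_1]$; its inverse is produced by the same recipe starting from $\si_2^{-1}$ and the invertible element $\si_2^{-1}(\lambda^{-1})$.

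For (2), I would realize both iterated extensions inside the common overring $W = R[x_1;\si_1,\de_1][x_2;\si_2]$. The relation $\si_2(x_1)=\lambda x_1$ rewrites as $x_1 x_2 = \lambda^{-1} x_2 x_1$, which forces $\si_1'(x_2)=\lambda^{-1} x_2$ and $\de_1'(x_2)=0$ as the only choices consistent with the Ore form $x_1 y = \si_1'(y) x_1 + \de_1'(y)$ for $y \in R[x_2;\si_2]$. The main work is to check that the prescriptions $\si_1'|_R=\si_1$, $\si_1'(x_2)=\lambda^{-1} x_2$, $\de_1'|_R=\de_1$, $\de_1'(x_2)=0$ extend consistently across the defining relation $x_2 r = \si_2(r) x_2$ of $R[x_2;\si_2]$; both compatibility tests reduce precisely to the two identities established in (1), so nothing new needs to be proved. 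Using the $R$-basis $\{x_1^i x_2^j\}$ of $W$ one then reads off that $W=R[x_2;\si_2][x_1;\si_1',\de_1']$.

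For (3), the $q$-skew identity $\de_1'\si_1' = q\si_1'\de_1'$ must be verified on the generators of $R[x_2;\si_2]$. On elements of $R$ it reduces to the given $q$-skew relation for $\de_1,\si_1$. On $x_2$ the left side equals $\de_1'(\lambda^{-1} x_2) = \de_1(\lambda^{-1}) x_2$, while the right side vanishes because $\de_1'(x_2)=0$; hence the condition on $x_2$ is equivalent to $\de_1(\lambda^{-1})=0$. Applying $\de_1$ to $\lambda\lambda^{-1}=1$ then shows $\de_1(\lambda^{-1})=0 \Leftrightarrow \de_1(\lambda)=0$. The only bookkeeping step worth flagging, and the one I expect to be the mildest obstacle, is checking that $q$ remains a legitimate skewing parameter over $R[x_2;\si_2]$: $\si_1'(q)=\si_1(q)=q$ and $\de_1'(q)=\de_1(q)=0$ are immediate, while the centrality of $q$ in $R[x_2;\si_2]$ requires $\si_2(q)=q$, which I would take as the implicit running hypothesis whenever $q$ is invoked as a skewing scalar across both stages of the iteration.
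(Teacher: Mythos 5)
The paper's own ``proof'' of this lemma is the single sentence ``A standard proof is left to the reader,'' so there is no argument of record to compare against; your direct verification is exactly the computation that sentence alludes to, and it is correct. In particular, your derivation of the two compatibility identities in (1) by applying $\si_2$ to $x_1 r = \si_1(r)x_1 + \de_1(r)$ and comparing coefficients, the construction of the inverse via $\si_2^{-1}$ with parameter $\si_2^{-1}(\lambda^{-1})$, the realization of both iterated extensions inside $W$ in (2), and the reduction of (3) to $\de_1(\lambda^{-1})=0 \Leftrightarrow \de_1(\lambda)=0$ on the generator $x_2$ are all sound. Your closing observation deserves emphasis: for $\de_1'$ to literally be a $q$-quantized $\si_1'$-derivation of $R[x_2;\si_2]$ under the paper's own definition (which requires $q$ central, $\si$-fixed, and $\de$-annihilated in the ring being acted on), one needs $\si_2(q)=q$ so that $q\in Z(R[x_2;\si_2])$; the paper never states this explicitly in the lemma, and it is genuinely needed when the lemma is invoked in the proof of Theorem \ref{Iterated Ore extensions}, so flagging it is a real improvement rather than mere bookkeeping.
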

\begin{proof}
A standard proof is  left to the reader.
\end{proof}

Let us make some remarks about the hypotheses that will appear in
the next theorem. In view of Lemma \ref{basic remarks two
indeterminates}(1) it will be natural, to avoid technicalities, to
assume that $\lambda_{ij}$'s from Theorem \ref{Iterated
Ore extensions} are central in $R_n$.  This means, in particular,
that the $\lambda_{ij}$'s are also fixed by appearing
automorphisms $\si_i$'s.  This, in turn, guarantees the
existence of the extension $T_n$ from the theorem.

The method of switching indeterminates in the proof of Theorem
\ref{Iterated Ore extensions}, based on Lemma \ref{basic remarks two
indeterminates}(2), goes back to the paper \cite{H}. The statement
(3) from the above lemma explains that, in the
process of switching indeterminates  in Theorem \ref{Iterated Ore
extensions}, we need all the $\lambda_{ij}$'s to be also invariant
under the action of all suitable skew derivations. A more
general situation will be considered in Proposition \ref{graded
method}.

\begin{theorem}
\label{Iterated Ore extensions}   Suppose   $R=R_0$ is a prime   ring   and $n\geq 1$.
Let  $R_i:=R_{i-1}[x_i;\si_i,\de_i]$,  $1\le i
\le n$, be a sequence of Ore extensions such that:

\begin{enumerate}
 \item[(i)] For any $1\le i\le n$ and $1\le j<i\le n$:\\
  $\bullet$ $\si_i|_{R_0}$ is an automorphism of $R_0$;\\
  $\bullet$  $\si_i(x_j)=\lambda_{ij}x_j$ where $\lambda_{ij}\in Z(R_0)$
 is a unit such that
 $\si_k(\lambda_{ij})=\lambda_{ij}$,
 $\de_k(\lambda_{ij})=0$, for all $i\le k\le n$;
 \item[(ii)] For any $1\le i \le n$, $\de_i$ is a $q_i$-skew
 $\si_i$-derivation of
 $R_{i-1}$, where $q_i\ne 1$.
 \end{enumerate}
Then the following conditions are equivalent:

\begin{enumerate}
\item[(1)] $R_n$ is a PI ring.
\item[(2)] There exist elements $y_i\in R_n$, where $1\le i \le n$, such that $R_n$
contains a subring $T_n$, which is an Ore extension of the form
$T_n:=R_0[y_1;\tau_1][y_2;\tau_2]\dots [y_n;\tau_n]$ where
$\tau_i|_{R_0}=\si_i|_{R_0}$,  for $1\le i \le n$ and
$\tau_i(y_j)=\lambda_{ij}y_j$, for $1\le j < i \le n$.  Moreover    $T_n$ is
\mbox{\rm PI} and the quotient rings $Q(T_n)$ and $ Q(R_n)$ are isomorphic.
\end{enumerate}
\end{theorem}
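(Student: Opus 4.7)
I plan to prove the theorem by induction on $n$, with the base case $n=1$ being exactly Theorem \ref{lengh one thm}. The implication $(2)\Rightarrow(1)$ is immediate: if $T_n$ is PI and $Q(T_n)\cong Q(R_n)$, then $R_n$ embeds into a PI ring and is itself PI. The substantive content is the construction of $T_n$ from the assumption that $R_n$ is PI.

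For the inductive step, assume the result for extensions of length $n-1$ and suppose $R_n$ is PI. The ring $R_{n-1}$ is prime (as an iterated Ore extension of $R_0$) and PI (as a subring of the PI ring $R_n$), and it satisfies hypotheses (i)--(ii) with indices restricted to $\le n-1$. Applying Theorem \ref{lengh one thm} to the outermost extension $R_n=R_{n-1}[x_n;\si_n,\de_n]$ (using that $\de_n$ is $q_n$-skew with $q_n\ne 1$) produces an element $y_n=c_n x_n - a_n$, with $c_n\in Z(R_0)$ and $a_n\in R_{n-1}$, such that $R_{n-1}[y_n;\si_n]\subseteq R_n$ and $Q(R_{n-1}[y_n;\si_n])\cong Q(R_n)$. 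By the inductive hypothesis applied to $R_{n-1}$, there is a subring $T_{n-1}=R_0[y_1;\tau_1]\cdots[y_{n-1};\tau_{n-1}]\subseteq R_{n-1}$ of the prescribed form with $Q(T_{n-1})\cong Q(R_{n-1})$. Setting $\tau_n:=\si_n|_{T_{n-1}}$, the plan is to take $T_n:=T_{n-1}[y_n;\tau_n]$; once this is shown to be an Ore extension inside $R_n$, the chain $Q(T_n)\cong Q(Q(T_{n-1})[y_n;\tau_n])\cong Q(Q(R_{n-1})[y_n;\si_n])\cong Q(R_n)$, obtained by combining Lemma \ref{isomorphisms and PI degree}(2)(b) with the inductive hypothesis and Theorem \ref{lengh one thm}(2)(a), yields the required isomorphism and incidentally shows that $T_n$ is PI.

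The main obstacle is verifying that $\tau_n$ is actually a well-defined automorphism of $T_{n-1}$ satisfying $\tau_n(y_j)=\lambda_{nj}y_j$ for every $j<n$; equivalently, that the inductively produced $y_j$'s are $\si_n$-eigenvectors for the predicted eigenvalues. To handle this I would strengthen the inductive hypothesis to record, for each $j$, the compatibility $\si_k(y_j)=\lambda_{kj}y_j$ and $\de_k(y_j)=0$ for all $k>j$. In the base case, the element $y_1=c_1 x_1 - a_1$ produced by Theorem \ref{lengh one thm} must be refined so that $c_1$ is fixed by every $\si_k$ with $k>1$ and $a_1$ is a joint $\lambda_{k1}$-eigenvector of these $\si_k$'s, annihilated by the corresponding $\de_k$'s; hypothesis (i) (centrality of the $\lambda_{ij}$'s together with their invariance under higher $\si_k,\de_k$) and the freedom in the choice of the central multiplier $c$ in Theorem \ref{lengh one thm}(2)(b) should make such a choice possible. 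The switching identities of Lemma \ref{basic remarks two indeterminates} provide the algebraic bookkeeping that guarantees the commutation relations $\tau_i(y_j)=\lambda_{ij}y_j$ persist through the construction and that $\tau_n$ extends unambiguously to the required automorphism of $T_{n-1}$.
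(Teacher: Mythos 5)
Your induction runs in the opposite direction from the paper's, and this creates a gap that your proposal acknowledges but does not close. The paper erases the \emph{outermost} derivation first, producing $T=R_{n-1}[y_n;\si_n]$, then uses Lemma \ref{basic remarks two indeterminates}(2)--(3) to commute $y_n$ past $x_{n-1},\dots,x_1$, rewriting $T$ as an iterated Ore extension over the enlarged base ring $R_0[y_n;\si_n]$, and finally applies the induction hypothesis to \emph{that} tower. Because $y_n$ is absorbed into the base ring before the induction fires, the induced automorphisms $\si''_i$ automatically act on $y_n$ by $\lambda_{ni}^{-1}$, and no post hoc compatibility between $\si_n$ and the inner $y_j$'s ever has to be verified.

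Your proposal instead applies the induction hypothesis to $R_{n-1}$ first, obtaining $T_{n-1}=R_0[y_1;\tau_1]\cdots[y_{n-1};\tau_{n-1}]\subseteq R_{n-1}$, and then wants to set $\tau_n:=\si_n|_{T_{n-1}}$. For this to make sense you need $\si_n(y_j)=\lambda_{nj}y_j$ for $j<n$, and you correctly identify this as the central obstruction. But the strengthening you propose does not clearly go through. The element $y_j$ has the shape $c_j x_j-a_j$ where $a_j$ comes from the Skolem--Noether argument of Lemma \ref{erasing derivation}, and nothing in that lemma or in Theorem \ref{lengh one thm}(2)(b) controls $\si_n(a_j)$. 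When $\si_j$ is outer on $Q(R_{j-1})$ the implementing element $b_j$ is unique and one can deduce $\si_n(b_j)=\lambda_{nj}b_j$ from the compatibility $\si_n\de_j=\lambda_{nj}\de_j\si_n$, but when $\si_j$ is inner there is genuine freedom in the choice and an extra argument is needed to select an equivariant representative. Worse, the part of your strengthened hypothesis asking for $\de_k(y_j)=0$ cannot in general be achieved: hypothesis (i) of the theorem constrains $\si_k(x_j)$ but says nothing whatsoever about $\de_k(x_j)$ for $j<k$, so $\de_k(y_j)=\de_k(c_jx_j-a_j)$ contains an uncontrolled term $\si_k(c_j)\de_k(x_j)$. (You also implicitly misstate the location of the central element: Theorem \ref{lengh one thm}(2)(b) produces $c\in Z(R_{n-1})$, not $Z(R_0)$.) In short, your decomposition is plausible but requires a careful equivariant refinement of Lemma \ref{erasing derivation} that is not in the paper and is not supplied by your sketch, whereas the paper's order of operations sidesteps the problem entirely.
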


\begin{proof}
Notice first that  all rings considered in the theorem are prime, as $R=R_0$ is
prime.
Observe also that    $\si_i(R_j)=R_j$,  for $0\le j < i \le n$.

(1) $\Rightarrow$ (2)  We proceed by induction on $n$. The case
$n=1$ is a direct consequence of Theorem \ref{lengh one thm}.

Suppose now that $n>1$ and $R_n$ is a PI ring.  Theorem \ref{lengh one thm}
shows that there exists an element $y_n\in R_n$ such that the subring  $T=R_{n-1}[y_n;
\si_n]\subseteq R_n $ generated by $R_{n-1}$ and $y_n$ satisfies $Q(T)\simeq Q(R_n)$.

\noindent Using Lemma \ref{basic remarks two indeterminates}(2)
 we can write
$T=R_{n-2}[y_n;\si_n][x_{n-1};\si_{n-1}',\de_{n-1}']$, where $\si'_{n-1}$ is given by
$\si'_{n-1}|_{R_{n-2}}=\si_{n-1}|_{R_{n-2}}$,
$\si_{n-1}'(y_n)=\lambda_{n, n-1}^{-1}y_n$ and $\de_{n-1}'$ is the extension of
$\de_{n-1}|_{R_{n-2}}$ to $R_{n-2}[y_n;\si_n]$ obtained by setting
$\de_{n-1}'(y_n)=0$.  Continuing this process we can present $T$ in the following way:

$$T=R_0[y_n;\si_n][x_1;\si_1',\de_1']\dots
[x_{n-1};\si_{n-1}',\de_{n-1}'], $$
 where   $\si_{i}'(y_n)=\lambda_{ni}^{-1}y_n$ and
$\si'_i(x_j)=\lambda_{ij}x_j$,  for all $1\le i \le n-1$ and $1\le
j< i\le n-1$.  Lemma \ref{basic remarks two indeterminates}(3)
shows that the $\si'_i$-derivations $\de'_i$ remain
$q_i$-quantized, $ 1 \leq i\leq {n-1}$.   We can now apply the
induction hypothesis to $T$, replacing $R_0$ by $R_0[y_n;\si_n]$,
and conclude that there exist elements $y_1,\dots, y_{n-1}\in T $ such that the subring $T_n$
generated by $R_0[y_n;\si_n]$ and $y_1,\dots, y_{n-1}$ satisfies
$$
T_n:=R_0[y_n;\si_n][y_1;\si''_1]\dots [y_{n-1};\si''_{n-1}]
\subseteq T \;\mbox{\rm and}\; Q(T_n) \simeq Q(T)
$$
where $ \si''_i|_{R_0}=\si'_i|_{R_0}=\si_i|_{R_0}$,
$\si''_i(y_n)=\si'_i(y_n)=\lambda_{ni}^{-1}y_n$, $
\si''_i(y_j)=\si_i(y_j)=\lambda_{ij}y_j$,
 for all $1\le i \le n-1 $ and $1\le
j<i\le n-1 $.

\noindent Now, using Lemma \ref{basic remarks two
indeterminates}(2) again, we can reorder the $y_i's$  to get that
$T_n=R_0[y_1;\tau_1][y_2;\tau_2]\dots[y_n;\tau_n]$, where
$\tau_i|_{R_0}=\si_i|_{R_0}$ and $\tau_i(y_j)=\lambda_{ij}y_j$,
for any $1\le j< i\le n$.   Therefore
$$
R_0[y_1;\tau_1][y_2;\tau_2]\dots[y_n;\tau_n]=T_n\subseteq T
\subseteq R_n
$$
as required.
As it was proved above we also have $Q(R_n)\simeq Q(T)\simeq Q(T_n)$.
Clearly $T_n$ is a PI ring as a subring of the PI
ring $R_n$.  This completes the proof of $(1)\Rightarrow (2)$.

The implication $(2) \Rightarrow (1)$    is clear.
\end{proof}

The above theorem is a partial generalization of the main result of
\cite{H}.  Comparing Theorem \ref{Iterated Ore extensions} above
with Theorem 4.6 \cite{H}, observe that  in \cite{H} it is additionally assumed that each $\de_i$,
with $1\leq i \leq n$, extends to  locally nilpotent iterative
higher $q_i$-skew $\si_i$-derivation on $R_{i-1}$ (see \cite{H}
for details).  Notice also that in \cite{H}, $R$ is a noetherian
domain which is an algebra over a field $k$ and   $
q_i, \lambda_{ij}\in k  $, where $q_i\not\in\{0,1\}$. This is due to the fact that
higher $q$-skew derivations
where used in \cite{H} for erasing derivations from Ore
extensions. The assumption that $\si_i|_{R_0}$ is an automorphism of $R_0$,
for all  $1\le i\le n$, from Theorem \ref{Iterated Ore extensions}  was not
formally stated in Theorem 4.6 \cite{H} but it was used in its proof.

Most of the quantum algebras can be presented as iterated Ore
extensions of the form $k[x_1][x_2;\si_2,\de_2]\ldots [x_n;\si_n,\de_n]$, where $k$
is a field and the appearing automorphisms and skew derivations are
as in Theorem \ref{Iterated Ore extensions}. Thus we record the
following theorem which covers this case  by taking  $R_0=k$,
$\si_1=\mbox{\rm id}_k,\de_1=0$ and $q_1=0$.

To make the presentation a bit shorter, we formulate the theorem
in the language of  algebras. If the Ore extension $\Tn$ is PI,
then so is the ring $R$, thus we will assume that $R$ satisfies a
polynomial identity.

\begin{theorem}
\label{Haynal}
Suppose that  $R=R_0$ is a prime \mbox{\rm PI} algebra over a field $k$ and $n\geq 1$.
Let  $R_i:=R_{i-1}[x_i;\si_i,\de_i]$,  $1\le i
\le n$, be a sequence of Ore extensions such that each $\si_i$  is a
$k$-linear automorphism  of $R_{i-1}$  and each  $\de_i$ is a $k$-linear $\si_i$-derivation of $R_{i-1}$ such that:
\begin{enumerate}
\item[(i)]  $\si_i|_{R_0}$ is an automorphism of $R_0$ of finite order, for any $1\le i \le n$;
\item[(ii)] $\si_i(x_j)=\lambda_{ij}x_j$ where $0\ne\lambda_{ij}\in
k$, for any  $1\le j < i\le n$;
\item[(iii)]  $ \de_i$ is a $q_i$-skew
$\si_i$-derivation of $R_{i-1}$, where $1\ne q_i\in k$, for  any $1\le i \le n$.
\end{enumerate}
Then the following conditions are equivalent:
\begin{enumerate}
\item[(1)] $R_n$ is a \mbox{\rm PI} algebra;
\item[(2)] $T_n=R_0[y_1;\si'_1][y_2;\si '_2]\cdots[y_n;\si '_n]$ is a \mbox{\rm PI}
algebra  where $\si'_i|_{R_0}=\si_i|_{R_0}$ and $\si'_i(y_j)=\lambda_{ij}y_j$,
for $1\le i\le n$  and $1\le j<i\le n$;
\item[(3)]  $\lambda_{ij}$ is a root of unity, for any $1\le j<i\le n$;
\item[(4)]$\si_i$ is an automorphism of finite order of
$R_{i-1}$, for any $1\le i\le n$.
\end{enumerate}
Moreover if one of the above equivalent conditions holds, then the algebras $R_n$ and $T_n$
have isomorphic classical rings of quotients  and  equal  \mbox{\rm PI} degrees.
\end{theorem}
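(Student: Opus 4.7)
The plan is to derive the theorem from the general Theorem \ref{Iterated Ore extensions} together with a direct analysis of the pure-automorphism tower $T_n$. First I would verify that the hypotheses of Theorem \ref{Iterated Ore extensions} are met: since each $\sigma_k$ and $\delta_k$ is $k$-linear and the scalars $\lambda_{ij}$ lie in $k\subseteq Z(R_0)$, we automatically have $\sigma_k(\lambda_{ij})=\lambda_{ij}$ and $\delta_k(\lambda_{ij})=0$. Theorem \ref{Iterated Ore extensions} then gives directly the equivalence $(1)\Leftrightarrow(2)$ as well as the isomorphism $Q(R_n)\simeq Q(T_n)$, from which the equality of the PI degrees follows by the standard fact that the PI degree of a prime PI ring coincides with that of its classical ring of quotients.

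Next I would establish $(3)\Leftrightarrow(4)$ by a short direct computation. If $\sigma_i$ has finite order $m$ on $R_{i-1}$, then $\sigma_i^m(x_j)=\lambda_{ij}^m x_j=x_j$ shows $\lambda_{ij}$ is a root of unity. Conversely, assuming $\sigma_i|_{R_0}$ has finite order and every $\lambda_{ij}$ is a root of unity, a common multiple $M$ of these orders makes $\sigma_i^M$ fix each of $R_0, x_1,\ldots,x_{i-1}$; since these generate $R_{i-1}$ as a ring, $\sigma_i^M$ is the identity on $R_{i-1}$.

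For $(2)\Leftrightarrow(3)$ I would iterate Theorem \ref{lengh one thm} along the tower $T_i=T_{i-1}[y_i;\sigma'_i]$, viewed as a $q$-skew extension with zero derivation (any $q\ne 1$, say $q=0$, works). The criterion there reads: $T_i$ is PI iff $T_{i-1}$ is PI and $\sigma'_i|_{Z(T_{i-1})}$ has finite order. For $(3)\Rightarrow(2)$, the same generator-and-common-power argument (applied this time to $T_{i-1}$, which is generated by $R_0$ and $y_1,\ldots,y_{i-1}$) shows that $\sigma'_i$ has finite order on $T_{i-1}$, hence on its center, so the induction closes. For $(2)\Rightarrow(3)$, the cleanest route is to notice that for any $j<i$ the subalgebra of $T_n$ generated over $k$ by $y_j$ and $y_i$ is the quantum plane $k_{\lambda_{ij}}[y_j,y_i]$, which is known to be PI precisely when $\lambda_{ij}$ is a root of unity; being a subalgebra of the PI algebra $T_n$, it must itself be PI.

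The only point needing a little care is this last subalgebra identification: we need the $y_i$'s produced by Theorem \ref{Iterated Ore extensions} to be genuine algebraically independent generators satisfying exactly $y_iy_j=\lambda_{ij}y_jy_i$ and no hidden $k$-relation. This is, however, already built into the conclusion of Theorem \ref{Iterated Ore extensions}, which presents $T_n$ as an iterated Ore extension with precisely those skewing automorphisms; the remainder of the argument is routine bookkeeping.
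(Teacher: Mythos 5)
Your decomposition of the equivalences differs from the paper's (which goes $(1)\Rightarrow(2)\Rightarrow(3)\Rightarrow(4)\Rightarrow(1)$ cyclically), and your substitute arguments for $(2)\Leftrightarrow(3)$ (via the quantum plane $k_{\lambda_{ij}}[y_j,y_i]$) and $(3)\Leftrightarrow(4)$ are fine. But there is a genuine gap at the very first step: Theorem \ref{Iterated Ore extensions} does \emph{not} give the equivalence $(1)\Leftrightarrow(2)$ of Theorem \ref{Haynal}. In Theorem \ref{Iterated Ore extensions}, condition (2) packages three things together: that suitable elements $y_i$ exist \emph{inside} $R_n$, that the resulting subring $T_n\subseteq R_n$ is PI, and that $Q(T_n)\simeq Q(R_n)$. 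The implication $(2)\Rightarrow(1)$ of that theorem is trivial only because the isomorphism $Q(T_n)\simeq Q(R_n)$ is \emph{assumed} as part of (2). In Theorem \ref{Haynal}, by contrast, condition (2) merely says the abstractly defined iterated Ore extension $T_n=R_0[y_1;\si'_1]\cdots[y_n;\si'_n]$ is PI; there is no embedding of $T_n$ into $R_n$ and no quotient-ring isomorphism available. So from Theorem \ref{Iterated Ore extensions} you only obtain $(1)\Rightarrow(2)$ (together with the ``moreover'' statements about $Q(R_n)\simeq Q(T_n)$ and equal PI degrees, which are conditional on $R_n$ being PI).

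Consequently your overall argument establishes $(1)\Rightarrow(2)$, $(2)\Leftrightarrow(3)$ and $(3)\Leftrightarrow(4)$, but never gets back to $(1)$. The missing and nontrivial direction is $(4)\Rightarrow(1)$: that if each $\si_i$ has finite order on $R_{i-1}$ then the iterated Ore extension $R_n$ (with its skew derivations $\de_i$ still present) is PI. The paper proves this by induction on $n$, using Theorem \ref{lengh one thm}(1) for the base case, then localizing at the regular elements $\mathcal{S}$ of $R_1$ (via Lemma \ref{isomorphisms and PI degree}(1)) to identify $R_n\mathcal{S}^{-1}$ with $Q(R_1)[x_2;\si_2,\de_2]\cdots[x_n;\si_n,\de_n]$, checking that the hypotheses (i)--(iii) persist over $Q(R_1)$, and applying the inductive hypothesis. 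None of your steps substitutes for this; in particular, knowing the derivation-free tower $T_n$ is PI says nothing a priori about $R_n$ once the $\de_i$'s are reintroduced, and this is precisely the content that must be proved.
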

\begin{proof} The implication
$(1)\Rightarrow (2)$ and the additional statements are direct consequences of   Theorem \ref{Iterated Ore extensions}.

$(2)\Rightarrow (3)$ Let us fix $ 1\le j<i \le n$. Since $\si'_j|_{R_0}=\si_j|_{R_0}$, the assumption  $(i)$ gives an  $0\ne n_j\in
\mathbb N$ such that $(\si_j'|_{R_0})^{n_j}=\mbox{\rm id}_{R_0}$. This means that
$y_j^{n_j}\in Z(R_0[y_j;\si'_j])$.

Now let us consider the subring $T_{ji}$ of $T_n$ generated by
$R_0, y_j,y_i$. The assumptions imposed imply that $T_{ji}$ is the
Ore extension $R_0[y_j;\si'_j][y_i;\si'_i]$ which is PI as a
subalgebra of $T_n$.  Hence, by Theorem \ref{lengh one thm}(1), $\si'_i$ is of finite order on
$Z(R_0[y_i;\si'_i])$. In particular there exists $0\ne l_i\in
\mathbb N$ such that $\si_i'^{l_i}(y_j^{n_j})=y_j^{n_j}$.  This
leads to $\lambda _{ij}^{n_jl_i}=1$, as required.

$(3)\Rightarrow (4)$ Fix $1 \le i \le n$.  By assumption, we can find $m\geq 1$ such that  $(\si_i|_{R_0})^m=\mbox{\rm id}_{R_0}$  and $\lambda_{ij}^m=1$, for all  $1\le
j<i\le n$.  This implies that the automorphism $\si_i$ is of finite order on $R_{i-1}$.

$(4)\Rightarrow (1)$   We will proceed by induction
on $n$. For $n=1$, the implication is a direct consequence of  Theorem \ref{lengh one thm}(1).

 Suppose $n> 1$.  Recall that, by assumption, $R_0$ is a prime PI algebra.  The case $n=1$,
 treated above, shows that $R_1= R_0[x_1;\si_1,\de_1]$ is a prime
 PI algebra as well.  Thus, the set $\mathcal{S}$ of all regular elements of $R_1$ is a right
 Ore set and, using Lemma \ref{isomorphisms and PI degree}(1)(b) repeatedly, we see that $\mathcal{S}$ is a right
 Ore set of regular elements of $R_n$ and $R_n\mathcal{S}^{-1}$  is
 isomorphic to
 $Q(R_1)[x_2;\si_2,\de_2]\ldots[x_n;\si_n,\de_n]$.

Notice that, due to the assumption (4), the extensions
 of the automorphisms  $\si_i$ to $Q(R_1)$,    $2\leq i\leq n$,        possess   the property (i)  with respect to $Q(R_1)$    and clearly also satisfy (ii).  Moreover   Lemma \ref{isomorphisms and PI degree}(1)(a) implies that the extensions of  $\si_i$-derivations $\de_i$ to $Q(R_1)$ remain $q_i$-quantized, $2\leq i\leq n$, i.e. (ii) holds. Therefore,  applying  the induction hypothesis  to
  $Q(R_1)[x_2;\si_2,\de_2]\ldots[x_n;\si_n,\de_n]\simeq R_n\mathcal{S}^{-1}$ we can conclude that $ R_n\mathcal{S}^{-1}$ satisfies polynomial identity and  so does  $R_n$.
  \end{proof}
 Both Theorem 1.2 (1) and  Corollary 4.7 of \cite{H} are direct consequences of the above
 theorem.  Moreover we relaxed the assumptions from \cite{H} that $\de_i$'s have to extend
 to locally nilpotent iterative
higher $q_i$-skew $\si_i$-derivations and that $R$ is a noetherian domain.

Let us mention that Haynal applied
  Corollary 4.7 \cite{H} and a result of De Concini and Procesi (Cf.
\cite{CC})
to   compute   PI degrees of some quantum
algebras.

Notice that the    Weyl algebra over a field $k$ of characteristic 0  does not satisfy a polynomial
 identity but $k[x]$ does. This shows that the implication  $(2)\Rightarrow (1)$   of Theorem \ref{Haynal} does not hold if we allow one of the $q_i$'s to be equal to one.
On the other hand, we will show  in Theorem \ref{graded method},  that the implication $(1)\Rightarrow (2)$ of Theorem  \ref{Haynal}  holds even when skew derivations $\de_i$'s are not quantized and   the base ring $R$ is not prime. The example of
Weyl algebra over a field $k$ of characteristic $p\ne 0$ shows that the PI degrees of $R_n$ and $T_n$ can be different in this case.

For the proof of Theorem \ref{graded method}  some preparation is needed.
Let $\si$ be an automorphism of the $\mathbb N$-graded ring $B=\bigoplus_{i =
0}^\infty  B_i$ (resp. of the filtered ring $C=\bigcup _{i=0}^\infty
C_i$), we say that $\si$ respects the gradation (resp. the
filtration) if $\si (B_i)=B_i$ (resp. $\si (C_i)=C_i$), for all $i\ge 0$.
  The associated graded ring $C_0\oplus \bigoplus_{i =
1}^\infty (C_i/C_{i-1})$ of the filtered ring $C=\bigcup _{i=0}^\infty
C_i$ will be denoted by  $ \mathfrak{gr}(C)$.

Henceforward we will extend slightly our previous notation and write also $A[x;\si]$
for the additive group of all polynomials from skew polynomial ring $R[x;\si]$
consisting of all polynomials with coefficients in an additive group $A$ of
$R$.

\begin{lemma}\label{graded}
 Suppose that $C=\bigcup _{i=0}^\infty C_i$ is a filtered ring and
 $\si$ is an automorphism of $C$ which respects the filtration.  Then:
\begin{enumerate}
\item[(1)] $\si$ induces an automorphism $\overline{\si}  $
 of the associated graded ring $ \mathfrak{gr}(C)$ which respects the gradation of $\mathfrak{gr}(C)$. Moreover $\overline{\si} |_{C_0}=\si|_{C_0}$.
\item[(2)]  The ring $B=C[x;\si]$ has a natural filtration
$B=\bigcup_{i=0}^{\infty} B_i$, where
$B_i=C_i[x;\si]$.  The associated graded ring $\mathfrak{gr}(B)$ is isomorphic
to $\mathfrak{gr}(C)[x;\overline{\si}]$,
where $\overline{\si}$ denotes the automorphism defined in (1).
\item[(3)] Let $T_n=C[x_1;\si_1]\dots [x_n;\si_n]$ be an iterated Ore extension such that:\\
$\bullet$   For any $1\leq i\leq n$, $\si_i|_C$ respects the filtration of $C$;\\
$\bullet$    For any $1\leq j<i\leq n$,  $\si_i(x_j)=a_{ij}x_j$,
for some invertible elements $a_{ij}\in C_0$.  \\
Then $T_n=\bigcup_{i=0}^\infty C_i[x_1;\si_1]\cdots[x_n;\si_n]$ is a filtration of $T_n$ such that
the associated graded ring $\mathfrak{gr}(T_n)$ is isomorphic to  $\mathfrak{gr}(C)[x_1; \si _1']\dots[x_n; \si_n']$ where the automorphisms $\si_i'$'s satisfy:\\
$\bullet$ For any  $1\leq i\leq n$, $\si_i'|_{\mathfrak{gr}(C)}=\overline{\si_i|_{C}}$ defined as in (1);\\
$\bullet$    For any $1\leq j<i\leq n$,  $\si_i'(x_j)=a_{ij}x_j$.

\end{enumerate}
\end{lemma}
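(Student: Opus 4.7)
The plan is to handle the three statements in order, since (2) depends on (1) and (3) follows by iterating (2).

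For (1), I would simply check that the rule $\overline{\si}(c+C_{i-1}):=\si(c)+C_{i-1}$ on each homogeneous component $C_i/C_{i-1}$ is well-defined and multiplicative. Well-definedness is immediate from $\si(C_{i-1})\subseteq C_{i-1}$, which in turn follows from the hypothesis that $\si$ respects the filtration. Bijectivity follows because $\si^{-1}$ respects the filtration as well (since $\si(C_i)=C_i$ forces $\si^{-1}(C_i)=C_i$). Multiplicativity is a direct computation using that the product in $\mathfrak{gr}(C)$ is inherited from $C$ via the obvious representatives. The last assertion $\overline{\si}|_{C_0}=\si|_{C_0}$ is tautological since $C_0$ sits inside $\mathfrak{gr}(C)$ as the degree-zero component.

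For (2), the first thing I would verify is that $B_i:=C_i[x;\si]$ defines a ring filtration of $B=C[x;\si]$. Containment $B_i\subseteq B_{i+1}$ and $\bigcup B_i=B$ are immediate; the only nontrivial point is $B_iB_j\subseteq B_{i+j}$, which reduces to checking that for $c\in C_i$, $d\in C_j$, one has $c\si^k(d)\in C_{i+j}$ for all $k$: this uses exactly that $\si^k$ respects the filtration. Then I would construct the isomorphism $\Phi\colon\mathfrak{gr}(C)[x;\overline{\si}]\to\mathfrak{gr}(B)$ by sending $(c+C_{i-1})x^k$ to $cx^k+B_{i-1}$, verifying well-definedness, and checking that it intertwines the two skew multiplications: on the source, $x(c+C_{i-1})=(\overline{\si}(c)+C_{i-1})x=(\si(c)+C_{i-1})x$, while on the target the same equality holds modulo $B_{i-1}$. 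Injectivity and surjectivity on each homogeneous component come down to the identification $C_i[x;\si]/C_{i-1}[x;\si]\cong(C_i/C_{i-1})[x]$ as additive groups (with the appropriate twisted multiplication).

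For (3), I would proceed by induction on $n$, setting $D_k=C[x_1;\si_1]\cdots[x_k;\si_k]$ with filtration $(D_k)_i:=C_i[x_1;\si_1]\cdots[x_k;\si_k]$. The inductive step requires showing that $\si_{k+1}$ respects the filtration of $D_k$: on $C$ this is the first bullet of the hypothesis, while on each $x_j$ ($j\le k$) we have $\si_{k+1}(x_j)=a_{k+1,j}x_j$ with $a_{k+1,j}\in C_0\subseteq C_i$ for every $i$, so the filtration degree is preserved. Once this is checked, part (2) applied to $D_{k+1}=D_k[x_{k+1};\si_{k+1}]$ identifies $\mathfrak{gr}(D_{k+1})$ with $\mathfrak{gr}(D_k)[x_{k+1};\overline{\si_{k+1}}]$; by induction $\mathfrak{gr}(D_k)\simeq\mathfrak{gr}(C)[x_1;\si'_1]\cdots[x_k;\si'_k]$, giving the claimed form. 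The two bullets about $\si'_i$ are then immediate: on $\mathfrak{gr}(C)$ the automorphism $\si'_i$ is the one constructed in (1), and on each $x_j$ we have $\si'_i(x_j)=a_{ij}x_j$ because $a_{ij}\in C_0$ so the passage to the associated graded leaves it unchanged.

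The main obstacle I anticipate is purely bookkeeping in (2), namely making the isomorphism $\Phi$ truly natural with respect to the twisted multiplication; everything else is a straightforward consequence of the fact that $\si$ and its iterates preserve each $C_i$ and that the distinguished scalars $a_{ij}$ lie in $C_0$, so they contribute nothing to filtration degrees and map to themselves in the associated graded ring.
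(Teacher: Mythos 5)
Your proposal is correct and follows the same overall structure as the paper: prove (1), deduce (2), then prove (3) by induction on $n$ using (2) and the observation that $\si_{k+1}$ respects the coefficient filtration of $C[x_1;\si_1]\cdots[x_k;\si_k]$ because $\si_{k+1}|_C$ does and $a_{k+1,j}\in C_0$. The only place you diverge from the paper is in (1): the paper constructs $\overline{\si}$ by first extending $\si$ to the Rees ring $\mathfrak{R}(C)=\sum_{i\ge0}C_ix^i\subseteq C[x]$ (with $\si(x)=x$) and then passing to the quotient $\mathfrak{R}(C)/\mathfrak{R}(C)x\simeq\mathfrak{gr}(C)$, whereas you verify directly that $\overline{\si}(c+C_{i-1}):=\si(c)+C_{i-1}$ is a well-defined graded ring automorphism. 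Both arguments are correct and equally short; the Rees-ring route packages the well-definedness and multiplicativity checks into the standard isomorphism $\mathfrak{R}(C)/\mathfrak{R}(C)x\simeq\mathfrak{gr}(C)$, while your direct verification is more elementary and avoids introducing the auxiliary ring. One small point worth making explicit in (3), which you gesture at: to see that $\si_{k+1}$ preserves (not merely shrinks) each $(D_k)_i$, note that $\si_{k+1}^{-1}(x_j)=\si_{k+1}^{-1}(a_{k+1,j}^{-1})\,x_j$ and $\si_{k+1}^{-1}(a_{k+1,j}^{-1})\in C_0$ because $\si_{k+1}|_C$ is a filtration-respecting automorphism of $C$; this gives $\si_{k+1}^{-1}((D_k)_i)\subseteq (D_k)_i$ and hence equality.
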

\begin{proof}
(1)  Let us extend the automorphism $\si$ of $C$ to an automorphism of $C[x]$
 by setting $\si(x)=x$. Then, by the assumption, $\si$ preserves the filtration
 of $C$.  Hence $\si$ induces an automorphism of the Rees extension
 $\mathfrak{R}(C)=\sum_{i=0}^\infty C_ix^i\subseteq C[x]$. Let  $I=\mathfrak{R}(C)x$.
 Then $\si(I)=I$ and $\si$
 induces an automorphism $\overline \si$ of $\mathfrak{R}(C)/I\simeq \mathfrak{gr}(C)$ which has the desired properties.

 (2) By the statement $(1)$, $\mathfrak{gr}(C)[x;\overline{\si}]$ exists.  The automorphism  $\si$ preserves the filtration of the base ring $C$.  Thus it is clear that $B=\bigcup_{i=0}^\infty B_i$,
 where $B_i=C_i[x;\si]$ is a filtration of
 $C[x;\si]$.  The isomorphism between the graded rings
 $\mathfrak{gr}(C[x;\si])$ and $\mathfrak{gr}(C)[x;\overline{\si}]$ is given by the natural isomorphism of homogeneous components $C_i[x;\si]/C_{i-1}[x;\si]$ and $(C_i/C_{i-1})[x;\overline{\si}]$, where  $i\geq 0$ and $C_{-1}=0$.

 $(3)$ We proceed by induction on $n$. The case $n=1$ is given by the statement
  $(2)$.

 Let $n>1$. By the induction hypothesis,  we know  that
$T_{n-1}=\bigcup_{i=0}^\infty C_i[x_1;\si_1]\cdots[x_{n-1};\si_{n-1}]$
is a filtration for $T_{n-1}$ such that:\\
$\bullet$ the  associated graded ring $\mathfrak{gr}(T_{n-1})$  is isomorphic to
$\mathfrak{gr}(C)[x_1;\si_1']\cdots[x_{n-1};\si_{n-1}']$
\\where \\
$\bullet$  $\si_i'|_{\mathfrak{gr}(C)}=\overline{\si_i|_{C}}$ defined as in (1), for any  $1\leq i\leq n-1$;\\
$\bullet$     $\si_i'(x_j)=a_{ij}x_j$, for any $1\leq j<i\leq n-1$.\\
Let us write $T_n$ as  $T_n=T_{n-1}[x_n;\si_n]$. Notice that, by the assumptions imposed on $\si_n$,  the filtration of $T_{n-1}$ is  respected by $\si_n$. Therefore, by (2), we can extend the filtration of $T_{n-1}$ to $T_n$ by setting
$$(T_n)_i=(T_{n-1})_i[x_n;\si_n]=C_i[x_1;\si_1]\ldots[x_n;\si_n],\;\mbox{for all}\;\; i\geq 0.$$

Hence, making use of (2) and (1) with $C=T_{n-1}$, we obtain $$\mathfrak{gr}(T_n)\simeq\mathfrak{gr}
(T_{n-1})[x_n;\si_n']\simeq \mathfrak{gr}(C)[x_1;\si_1']\cdots[x_{n-1};\si_{n-1}'][x_n;\si_n']$$ where $\si_n'=\overline{\si_n}$, as defined in (1).
In particular, $\overline{\si_n}\vert_{C_0[x_1;\si_1]\ldots [x_{n-1};\si_{n-1}]}=\si_n\vert_{C_0[x_1;\si_1]\ldots [x_{n-1};\si_{n-1}]}$. This shows that $ \si_n'(x_i)=\si_n(x_i)=a_{ni}x_i$, for
$1\le i\le n-1$.  Since $\mathfrak{gr}(C)\subseteq \mathfrak{gr}(T_{n-1})$, we also have $\si_n'\vert_{\mathfrak{gr}(C)}=\overline{\si_n}\vert_{\mathfrak{gr}(C)}=\overline{\si_n\vert_C}$.   This shows that the automorphism $\si_n'$ has the desired properties and completes the proof of the   lemma.
\end{proof}
\begin{lemma}\label{extending filtrations}
 Let $n\geq 1$ and $R=R_0$.  Suppose that, for $1\le i\le n$, the
iterated Ore extension $R_i=R_{i-1}[x_i;\si_i,\de_i]$ is given
and:
\begin{enumerate}
\item[(i)]  for any $1\leq  i\leq n$, $\si_i|_{R_0}$ is an automorphism of $R_0$;
 \item[(ii)] for any $1\leq j<i\leq n$, $\si_i(x_{j})=a_{ij}x_j+c_{ij}$, where
 $a_{ij}\in  R_0$ is invertible and $ c_{ij}\in R_{j-1}$.
\end{enumerate}
Then $\si_i \vert_{R_j}$ is an
automorphism of $R_j$, for any $0\le j < i \le n$. Moreover   $\si_i \vert_{R_j}$  respects the filtration of $R_j$ determined by the degree in
$x_j$, when $j\geq 1$.
\end{lemma}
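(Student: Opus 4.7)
The plan is to fix $i$ and proceed by induction on $j$ with $0 \le j < i$, proving at each step that $\si_i|_{R_j}$ is an automorphism of $R_j$; the filtration claim for level $j$ will then drop out as a by-product of the same computation. The base case $j=0$ is precisely hypothesis~(i), and there is no filtration to verify.

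For the inductive step, assume $\si_i|_{R_{j-1}}$ is an automorphism of $R_{j-1}$. First I would check that $\si_i(R_j) \subseteq R_j$: a generic element of $R_j$ writes as $f = \sum_m r_m x_j^m$ with $r_m \in R_{j-1}$, and then $\si_i(f) = \sum_m \si_i(r_m)(a_{ij}x_j + c_{ij})^m$. The inductive hypothesis gives $\si_i(r_m)\in R_{j-1}$, while $a_{ij}x_j + c_{ij} \in R_j$ because $a_{ij}\in R_0 \subseteq R_{j-1}$ and $c_{ij}\in R_{j-1}$. For surjectivity, the invertibility of $a_{ij}$ in $R_0$ lets me solve the relation for $x_j$: $x_j = a_{ij}^{-1}\si_i(x_j) - a_{ij}^{-1}c_{ij}$, placing $x_j$ in the subring generated by $\si_i(R_{j-1}) = R_{j-1}$ and $\si_i(x_j)$, which lies in $\si_i(R_j)$. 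Combined with $R_{j-1}\subseteq \si_i(R_j)$ this forces $R_j\subseteq \si_i(R_j)$; injectivity is automatic since $\si_i$ is an automorphism of the ambient ring $R_{i-1}$.

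For the filtration statement (when $j \ge 1$), set $F_k R_j := \sum_{m=0}^k R_{j-1} x_j^m$. The Ore commutation $x_j s = \si_j(s)x_j + \de_j(s)$ for $s\in R_{j-1}$ makes this a ring filtration, so $(a_{ij}x_j + c_{ij})^m \in F_m R_j$ for every $m$; the expansion from the previous paragraph then immediately gives $\si_i(F_k R_j) \subseteq F_k R_j$. For the reverse inclusion, I would observe that $\si_i^{-1}$ satisfies an analogue of~(ii): applying $\si_i^{-1}$ to $\si_i(x_j) = a_{ij}x_j + c_{ij}$ yields $\si_i^{-1}(x_j) = \si_i^{-1}(a_{ij})^{-1}\bigl(x_j - \si_i^{-1}(c_{ij})\bigr)\in F_1 R_j$, where $\si_i^{-1}(a_{ij})^{-1}\in R_0$ is a unit (because $\si_i|_{R_0}$ is an automorphism of $R_0$) and $\si_i^{-1}(c_{ij})\in R_{j-1}$ by the inductive hypothesis. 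Rerunning the previous argument with $\si_i^{-1}$ in place of $\si_i$ gives $\si_i^{-1}(F_k R_j)\subseteq F_k R_j$, and composing the two inclusions yields the required equality.

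The only real point requiring attention is recognizing that $\si_i^{-1}$ inherits the same structural hypotheses as $\si_i$, so that the argument can be rerun in reverse to upgrade the filtration inclusion to an equality. Once that symmetry is in hand, each individual verification reduces to a one-line rewrite using the Ore relations, and the induction closes cleanly.
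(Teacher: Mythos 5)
Your proof is correct and follows essentially the same route as the paper's: induction on $j$ with hypothesis (i) as the base case, the forward inclusion $\si_i(R_j)\subseteq R_j$ from the definition, and surjectivity by solving the relation $\si_i(x_j)=a_{ij}x_j+c_{ij}$ for $x_j$ inside $\si_i(R_j)$ using the inductive hypothesis. You simply spell out the two filtration inclusions (including noting that $\si_i^{-1}$ inherits a form of hypothesis (ii)), which the paper dismisses as ``easy to conclude,'' and you write the preimage of $x_j$ directly via $x_j=a_{ij}^{-1}\si_i(x_j)-a_{ij}^{-1}c_{ij}$ rather than the paper's choice of auxiliary elements $b_{ij},d_{ij}$; these are equivalent phrasings of the same step.
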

\begin{proof}
 The inclusion $\si_i(R_j)\subseteq R_j$ is clear from the definition of $\si_i$.
The reverse inclusion is obtained by induction on $0\le j<i$.  The case $j=0$
is given by the hypothesis $(i)$.  If $j>0$, the induction hypothesis shows that there
exist elements $b_{ij}\in R_0$, $d_{ij}\in R_{j-1}$ such that
$\si_i(b_{ij})=a_{ij}^{-1}$ and $\si_i(d_{ij})=c_{ij}$.  This gives
$x_j=\si_i(b_{ij}x_j-d_{ij})$.  In particular, $x_j\in \si_i(R_j)$.  It is now
easy to conclude that $\si_i\vert_{R_j}$ is an automorphism of $R_j$ which
respects the filtration of $R_j$ given by the degree in $x_j$, when $j\geq 1$.
\end{proof}
 It was observed in Proposition 1.2 \cite{LM} that if a filtered ring $C$
satisfies a polynomial identity, then the same is true for the associated graded
ring $\mathfrak{gr}(C)$. Thus the skew polynomial
ring $\A{R}$ is always a PI ring, provided $\T{R}$ is such.
In the following theorem we extend this result to iterated Ore extensions.

\begin{theorem}\label{graded method}
Let $n\geq 1$ and $R=R_0$.  Suppose that, for $1\le i\le n$, the
iterated Ore extension $R_i=R_{i-1}[x_i;\si_i,\de_i]$ is given
and:
\begin{enumerate}
\item[(i)]  for any $1\leq  i\leq n$, $\si_i|_{R_0}$ is an automorphism of $R_0$;
 \item[(ii)] for any $1\leq j<i\leq n$, $\si_i(x_{j})=a_{ij}x_j+c_{ij}$, where
 $a_{ij}\in  R_0$ is invertible and $ c_{ij}\in R_{j-1}$.
\end{enumerate}
 Then there exists an iterated Ore extension
 $T_n=R_0[y_{1};\si'_{1}]\ldots [y_n;\si_n']$  such that:
 \begin{enumerate}
   \item[(1)] $\si'_1=\si_1$, $\si'_i|_{R_0}=\si_i|_{R_0}$ and $\si'_i(y_j)=a_{ij}y_j$, for all $ 1\leq j< i\leq n$;
   \item[(2)] If $R_n$ is a PI ring, then $T_n$ is also a PI ring.
 \end{enumerate}
\end{theorem}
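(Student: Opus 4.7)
My plan is to prove both parts of the theorem simultaneously by producing $T_n$ as the end-result of $n$ successive passages to associated graded rings of $R_n$, processing the variables in decreasing order $x_n, x_{n-1}, \ldots, x_1$. At each passage the top-level derivation dies while the corresponding automorphism survives with the desired scaled action on the previously introduced variables; since the associated graded ring of a filtered PI ring is PI (Proposition~1.2 of~\cite{LM}), the PI statement in~(2) will follow automatically. The set-up rests on Lemma~\ref{extending filtrations}, which guarantees that each $\si_i$ restricts to an automorphism of every $R_j$ (for $0 \le j < i \le n$) respecting the $x_j$-degree filtration of $R_j$ when $j \ge 1$. I first filter $R_n = R_{n-1}[x_n;\si_n,\de_n]$ by $x_n$-degree, with the trivial filtration on $R_{n-1}$; then $\de_n$ vanishes in the graded ring and one obtains $U_1 := \mathfrak{gr}(R_n) \simeq R_{n-1}[y_n;\si_n]$, where $y_n$ is the principal symbol of $x_n$.

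For the iterative step, I assume that after $k$ passages I have reached a derivation-free skew-polynomial iterated Ore extension
\[
U_k = R_{n-k}[y_{n-k+1};\tau_{n-k+1}]\cdots[y_n;\tau_n]
\]
with $\tau_i|_{R_{n-k}} = \si_i|_{R_{n-k}}$ and $\tau_i(y_j) = a_{ij}y_j$ for all $n-k < j < i \le n$. I then filter the base $R_{n-k}$ by the $x_{n-k}$-degree and extend this filtration to $U_k$ via Lemma~\ref{graded}(3), whose hypotheses are verified: each $\tau_i|_{R_{n-k}}$ respects the filtration by Lemma~\ref{extending filtrations}, and each $a_{ij}$ lies in $R_0$, which sits inside the degree-zero piece $R_{n-k-1}$ of the $x_{n-k}$-filtration of $R_{n-k}$. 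Since the $x_{n-k}$-degree filtration on $R_{n-k} = R_{n-k-1}[x_{n-k};\si_{n-k},\de_{n-k}]$ kills $\de_{n-k}$, the associated graded ring is
\[
U_{k+1} = R_{n-k-1}[y_{n-k};\si_{n-k}][y_{n-k+1};\tau_{n-k+1}']\cdots[y_n;\tau_n'].
\]
The inductive invariant is preserved: $\tau_i'(y_{n-k})$ is the principal symbol of $\si_i(x_{n-k}) = a_{i,n-k}x_{n-k} + c_{i,n-k}$, and since $c_{i,n-k} \in R_{n-k-1}$ has $x_{n-k}$-degree zero this principal symbol is precisely $a_{i,n-k} y_{n-k}$.

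After $n$ passages I arrive at $T_n := U_{n+1} = R_0[y_1;\si_1']\cdots[y_n;\si_n']$ with $\si_1' = \si_1$, $\si_i'|_{R_0} = \si_i|_{R_0}$, and $\si_i'(y_j) = a_{ij}y_j$ for $1 \le j < i \le n$, which proves~(1); iterating the fact that associated graded preserves PI then carries the hypothesis from $R_n$ all the way down to $T_n$, proving~(2). The step I expect to be the main obstacle is the careful book-keeping at each iteration: I must verify that the induced automorphisms $\tau_i'$ still act on each previously-constructed $y_j$ by multiplication by the original $a_{ij}$, so that the hypothesis of Lemma~\ref{graded}(3) continues to hold at the next stage. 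This works precisely because $a_{ij} \in R_0$ belongs to the bottom ring, which embeds into the degree-zero part of every subsequent $x_{n-k}$-filtration; and because the correction terms $c_{ij}$ in $\si_i(x_j) = a_{ij} x_j + c_{ij}$ live one step down in the tower and are therefore absorbed in lower filtration degree.
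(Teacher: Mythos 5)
Your proposal is correct and follows essentially the same route as the paper: repeatedly pass to the associated graded ring with respect to the $x_{n-k}$-degree filtration of the current base $R_{n-k}$ (extended across the skew variables via Lemmas~\ref{extending filtrations} and~\ref{graded}), killing one derivation per step and tracking the action of the surviving automorphisms on principal symbols, then invoke Proposition~1.2 of~\cite{LM} at each stage to propagate the PI property. The only slip is an inconsequential off-by-one in your indexing (you should arrive at $T_n = U_n$, not $U_{n+1}$, after $n$ passages).
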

\begin{proof}
%We leave as an exercise the checking that properties $(i)$ and $(ii)$ imply %that the restriction $\si_i|_{R_j}$ is an automorphism of $R_j$, for any %$0\leq j<i\leq n$.
Let us first remark that, for any $0\le j < i \le n$, $\si_i \vert_{R_j}$ is an
automorphism of $R_j$ which respects the filtration of $R_j$ determined by the degree in
$x_j$.

$(1)$  We construct, by induction, a sequence $W_0,W_{1},\dots,
W_{n}=T_n$ of filtered rings. Let us put $W_0=R_n$.   $W_0$
 is naturally filtered by the degree
in $x_n$ and  we set $W_1=\mathfrak{gr}(W_0)\simeq R_{n-1}[y_{n};\si_n] $.  By Lemma \ref{extending filtrations}, the filtration of $R_{n-1}$ given by the degree in $x_{n-1}$ is respected by $\si_n$. The filtration on $W_1$ is given by   extending, as in Lemma \ref{graded}(2),  this  filtration of $R_{n-1}$.

 Suppose $1\leq s< n$ and  the extension $W_s=R_{n-s}[y_{n-s+1};\mu_{n-s+1}]\dots[y_n;\mu_n]$ is defined,
  where:
 $$ \mu_{i}|_{R_{n-s} }=\si_i |_{R_{n-s} }\;\mbox{and}\;\mu_i(y_j)=
 a_{ij}y_j ,\;\mbox{ for any}\;  n-s+1 \le i \le n\;\mbox{and}\; n-s+1\leq j<i.$$

Now we can apply  Lemma \ref{extending filtrations} and Lemma \ref{graded}     with $C=R_{n-s}=R_{n-s-1}[x_{n-s},\si_{n-s}, \de_{n-s}]$ filtered  by the degree in $x_{n-s}$ to obtain a filtration on $W_s$ such that
$$\mathfrak{gr}(W_s)\simeq \mathfrak{gr}(R_{n-s})[y_{n-s+1};\mu'_{n-s+1}]\dots[y_n;\mu'_n]\simeq R_{n-s-1}[y_{n-s};\si_{n-s}][y_{n-s+1};\mu'_{n-s+1}]
\dots[y_n;\mu'_n]$$
where
 $$ \mu'_{i}|_{R_{n-s-1} }=\si_i |_{R_{n-s-1} }\;\mbox{and}\;\mu'_i(y_j)=
 a_{ij}y_j ,\;\mbox{ for any}\;  n-s  \le i \le n\;\mbox{and}\;  n-s\leq  j<i$$
  with   $\mu'_{n-s}=\si_{n-s}$.
We  define
$W_{s+1}$ by setting $W_{s+1}=\mathfrak{gr}(W_s)$.
  The desired iterated skew polynomial ring $T_n$ is $T_n=W_n$.

(2)  Suppose that $R_n=W_0$ satisfies a polynomial identity. Notice that each $W_{s+1}$, with $0\leq s<n$, is defined as $W_{s+1}=\mathfrak{gr}(W_s)$. Therefore $T_n=W_n$ is a PI ring, by   Proposition 1.2 of \cite{LM}  which says  that the associated graded
ring $\mathfrak{gr}(C)$ of a filtered ring $C$ is PI, provided $C$ is such.
\end{proof}

\end{document}